\def\q#1.{{\bf #1. }}
\def\x{{\bf x}}
\def\y{{\bf y}}
\def\d{{\bf d}}
\def\a{{\bf a}}
\def\b{{\bf b}}
\def\p{{\bf p}}
\def\q{{\bf q}}
\DeclareMathOperator{\tr}{tr}
\DeclareMathOperator{\ch}{ch}
\DeclareMathOperator{\col}{col}
\DeclareMathOperator{\AT}{AT}
\let\leqslant\leqslant
\let\geqslant\geqslant
\newtheorem{theorem}{Theorem}
\newtheorem{proposition}[theorem]{Proposition}
\newtheorem{corollary}[theorem]{Corollary}
\newtheorem{remark}{Remark}
\begin{document}

\title{Alon -- Tarsi numbers of direct products}

\author{
Fedor Petrov\thanks{St. Petersburg State University, St. Petersburg, Russia
{\sl f.v.petrov@spbu.ru}.}
\and
Alexey Gordeev\thanks{The Euler International Mathematical Institute, St. Petersburg, Russia
{\sl gordalserg@gmail.com }.}}

\maketitle

\begin{abstract}
We provide a general framework on
the coefficients of the graph polynomials
of graphs which are Cartesian products.
As a corollary, we prove that
if $G=(V,E)$ is a graph with degrees
of vertices $2d(v), v\in V$,
and the graph polynomial
$\prod_{(i,j)\in E} (x_j-x_i)$
contains an ``almost central'' monomial
(that means a monomial $\prod_v x_v^{c_v}$, where
$|c_v-d(v)|\leqslant 1$ for all $v\in V$), then
the Cartesian product 
$G\square C_{2n}$ is $(d(\cdot)+2)$-choosable.
\end{abstract}

\section{Introduction}

Let $\mathbb{F}$ be a field, $\x=(x_1,\ldots,x_n)$ a set of 
variables. For $A\subset \mathbb{F}$
and $a\in A$ denote 
$$
D(A,a):=\prod_{b\in A\setminus a}
(a-b).
$$
For a multi-index $\d=(d_1,\ldots,d_n)\in \mathbb{Z}_{\geqslant 0}^n$
denote $|\d|=d_1+\ldots +d_n$,
$\x^\d=\prod_{i=1}^n x_i^{d_i}$.
For a polynomial $f\in \mathbb{F}[\x]$
denote by $[\x^\d]f$ the coefficient of
monomial $\x^\d$ in polynomial $f$.

\begin{theorem}[Combinatorial Nullstellensatz \cite{Alon1999comb}]
Choose 
arbitrary subsets 
$A_i\subset \mathbb{F}$,
$|A_i|=d_i+1$ for $i=1,\ldots,n$.
Denote 
$A=A_1\times A_2\times\ldots \times A_n$.
For any polynomial $f\in \mathbb{F}[\x]$
such that $\deg f\leqslant |\d|$, if $[\x^{\d}]f\ne 0$,
then there exists $\a\in A$
for which $f(\a)\ne 0$.
\end{theorem}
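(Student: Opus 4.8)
The plan is to establish the so-called Coefficient Formula, which expresses $[\x^\d]f$ as a weighted sum of the values of $f$ over the grid $A$, and then read off the theorem as an immediate contrapositive. Concretely, I would prove that for every $f$ with $\deg f\leqslant|\d|$,
$$
[\x^\d]f \;=\; \sum_{\a\in A}\frac{f(\a)}{\prod_{i=1}^n D(A_i,a_i)},
$$
where $\a=(a_1,\dots,a_n)$. Granting this identity, the theorem is immediate: if $f(\a)=0$ for every $\a\in A$, the right-hand side vanishes, contradicting $[\x^\d]f\ne 0$; hence some $\a\in A$ must satisfy $f(\a)\ne 0$. The definition of $D(A,a)$ already supplied in the excerpt is precisely the denominator appearing here, so this is the route the notation points to.

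To prove the identity I would use linearity in $f$ and reduce to the case of a single monomial $f=\x^{\mathbf e}$ with $|\mathbf e|\leqslant|\d|$. For such an $f$ we have $f(\a)=\prod_i a_i^{e_i}$, so each summand factors across the coordinates, and since $A=A_1\times\cdots\times A_n$ the whole right-hand side becomes
$$
\prod_{i=1}^n\left(\sum_{a\in A_i}\frac{a^{e_i}}{D(A_i,a)}\right).
$$
The core of the argument is the one-variable evaluation of the inner sum. I would recognize $\sum_{a\in A_i} a^{e_i}/D(A_i,a)$ as the coefficient of $x^{d_i}$ (the top possible degree) in the Lagrange interpolant of $x^{e_i}$ through the $d_i+1$ points of $A_i$, using that the basis polynomial $\prod_{b\in A_i\setminus a}(x-b)/D(A_i,a)$ is monic of degree $d_i$. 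When $e_i\leqslant d_i$ the interpolant of $x^{e_i}$ is $x^{e_i}$ itself, by uniqueness of interpolation in degree $\leqslant d_i$, so the inner sum equals $1$ if $e_i=d_i$ and $0$ if $e_i<d_i$.

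Finally I would combine these local evaluations with the global degree bound. If some coordinate has $e_i<d_i$, the corresponding factor is $0$ and the whole product vanishes, matching $[\x^\d]\x^{\mathbf e}=0$. Otherwise $e_i\geqslant d_i$ for all $i$, and the hypothesis $|\mathbf e|\leqslant|\d|$ then forces $\mathbf e=\d$, in which case every factor equals $1$ and the product is $1=[\x^\d]\x^\d$. Thus the identity holds on monomials, and hence by linearity on all $f$ of degree at most $|\d|$. The step I expect to be the main obstacle is the clean handling of the inner sum: one must argue the interpolation/divided-difference identity carefully, and make sure it is the \emph{total}-degree hypothesis $|\mathbf e|\leqslant|\d|$ that eliminates the troublesome monomials having some $e_i>d_i$, since for those the inner factor need not vanish on its own.
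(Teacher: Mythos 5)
Your proof is correct, but there is nothing in the paper to compare it against: the paper states this theorem purely as a citation to Alon's original article and gives no proof of its own. What you establish is in fact a strictly stronger statement, the coefficient formula
\[
[\x^\d]f=\sum_{\a\in A}\frac{f(\a)}{\prod_{i=1}^n D(A_i,a_i)},\qquad \deg f\leqslant|\d|,
\]
from which the Nullstellensatz follows by contraposition. Each step of your argument is sound: linearity reduces to a monomial $\x^{\mathbf e}$; the product structure of $A$ factors the sum into one-variable sums; each one-variable sum is the top-degree coefficient of the Lagrange interpolant of $x^{e_i}$ on the $d_i+1$ nodes of $A_i$, hence equals $1$ if $e_i=d_i$, equals $0$ if $e_i<d_i$, and may be nonzero if $e_i>d_i$; and your case analysis uses the total-degree hypothesis exactly where it is needed, since a coordinate with $e_i>d_i$ forces some other coordinate to have $e_j<d_j$, whose zero factor kills the product. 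This route (due to Las\'on and to Karasev--Petrov) is genuinely different from the proof in the cited source, which proceeds via an ideal-membership theorem ($f$ vanishing on $A$ lies in the ideal generated by the polynomials $\prod_{a\in A_i}(x_i-a)$) followed by a degree comparison. Your approach buys an explicit evaluation identity --- precisely the ``explicit form of Combinatorial Nullstellensatz (the coefficient formula)'' that the authors mention was used in \cite{li2019alontarsi} --- and it also explains why the otherwise-unused quantity $D(A,a)$ is defined immediately before the theorem; Alon's approach, in exchange, yields algebraic ideal-membership information that the evaluation formula alone does not provide.
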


Alon and Tarsi \cite{Alon1992} suggested to use Combinatorial Nullstellensatz
for list graph colorings. Namely, if $G=(V,E)$ is a 
non-directed graph
with the vertex set $V=\{v_1,\ldots,v_n\}$ and
the edge set $E$, we define its graph polynomial
in $n$ variables $x_1,\ldots,x_n$ as
$$
F_G(\x)=\prod_{(i,j)\in E} (x_j-x_i). 
$$
Here each edge corresponds to one linear factor
$x_j-x_i$, so the whole $F_G$ is defined up to
a sign. Assume that each vertex $v_i$ has a list $A_i$
consisting of $d_i+1$ colors, which are real numbers.
A \emph{proper list coloring of $G$ subordinate to lists 
$\{A_i\}_{1\leqslant i\leqslant n}$} is a choice of colors $\a=(a_1,\ldots,a_n)\in
A_1\times \ldots \times A_n=A$ for which
neighbouring vertices have different colors:
$a_i\ne a_j$ whenever $(i,j)\in E$. In other words,
a proper list coloring is a choice of $\a\in A$
for which $F_G(\a)\ne 0$. 
If $|\d|=|E|$, the existence of a proper list
coloring follows from $[\x^\d]F_G\ne 0$.

Define the \emph{chromatic number} $\chi(G)$ of the graph $G$ as the minimal $m$ such that there exists a proper list coloring of $G$ subordinate to equal lists of size $m$: $A_i=\{1,\dots,m\}$. Define the \emph{list chromatic number} 
$\ch(G)$ of the graph $G$
as the minimal
$m$ such that for arbitrary lists $A_i$,
$|A_i|\geqslant m$, there exists a proper
list coloring of $G$ subordinate to these lists.
Define the \emph{Alon--Tarsi number} $AT(G)$ of the graph 
$G$ as the minimal $k$ for which there exists 
a monomial $\x^\d$ such that $\max(d_1,\ldots,d_n)=k-1$
and $[\x^\d]F_G\ne 0$.

From above we see that the list chromatic number
does not exceed the Alon--Tarsi number: 
\begin{equation}\label{choice-and-AT}
 \ch(G)\leqslant \AT(G).   
\end{equation}

Further we consider the Alon--Tarsi
numbers for the graphs which are direct products
$G_1\square G_2$ of simpler graphs $G_1=(V_1,E_1)$ and $G_2=(V_2,E_2)$.
Recall that the vertex set of 
$G_1\square G_2$ is $V_1\times V_2$
and two pairs $(v_1,v_2)$ and $(u_1,u_2)$
are joined by an edge if and only if either
$v_1=u_1$ and $(v_2,u_2)\in E_2$ or
$v_2=u_2$ and $(v_1,u_1)\in E_1$.

It is well known (Lemma 2.6 in
\cite{Sabidussi1957}) 
that $\chi(G_1\square G_2)=\max(\chi(G_1),\chi(G_2))$. Much less is known about the list chromatic number (and the Alon--Tarsi number) of the Cartesian product of graphs. Borowiecki, Jendrol, Kr{\'{a}}l, and Mi{\v{s}}kuf \cite{Borowiecki2006} gave the following bound:

\begin{theorem}[\cite{Borowiecki2006}]\label{thm-colbound}
For any two graphs $G$ and $H$,
\[
\ch(G\square H)\leqslant\min(\ch(G)+\col(H),\col(G)+\ch(H))-1.
\]
\end{theorem}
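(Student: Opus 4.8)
The plan is to establish the single inequality $\ch(G\square H)\leqslant \ch(G)+\col(H)-1$; the second estimate then follows by swapping the roles of the two factors, since $G\square H\cong H\square G$. Write $k=\ch(G)$ and $\ell=\col(H)$, and suppose every vertex $(u,v)$ of $G\square H$ is equipped with a list $L(u,v)$ of size at least $k+\ell-1$. I must produce a proper list coloring subordinate to these lists.

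The key structural input is the defining property of the coloring number: the vertices of $H$ admit a linear order $v_1,v_2,\ldots,v_m$ in which each $v_j$ has at most $\ell-1$ neighbours among its predecessors $v_1,\ldots,v_{j-1}$. I would color $G\square H$ copy-by-copy along this order. For each $j$, the subgraph induced on $\{(u,v_j):u\in V(G)\}$ is an isomorphic copy of $G$, and the only already-colored vertices adjacent to it are the images $(u,v_i)$ with $i<j$ and $v_i$ a neighbour of $v_j$ in $H$.

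First I would process $v_1$: its copy of $G$ has no colored neighbours, so each vertex keeps its full list of size $\geqslant k+\ell-1\geqslant k=\ch(G)$, and a proper coloring of this copy exists by the definition of $\ch(G)$. Inductively, when I reach $v_j$, each vertex $(u,v_j)$ loses at most one admissible color for every back-neighbour $v_i$ of $v_j$, namely the color already assigned to $(u,v_i)$, which is forbidden because $(u,v_i)$ is adjacent to $(u,v_j)$ in $G\square H$. Since there are at most $\ell-1$ such back-neighbours, the reduced list has size at least $(k+\ell-1)-(\ell-1)=k$. These reduced lists still have size $\geqslant\ch(G)$, so the copy of $G$ sitting over $v_j$ admits a proper coloring drawn from them; choosing it extends the partial coloring. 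Edges of $G\square H$ joining $(u,v_j)$ to a \emph{forward} neighbour $(u,v_{j'})$ with $j'>j$ are not yet constraints and will be respected automatically when $v_{j'}$ is processed, because the color of $(u,v_j)$ will then be deleted from the list of $(u,v_{j'})$.

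The argument is essentially a greedy sweep, and I expect no serious obstacle: the only point requiring care is the bookkeeping that every cross-copy edge of $G\square H$ is charged exactly once, to the later of its two endpoints, so that the list shrinkage at each copy is bounded by the back-degree $\ell-1$ rather than by the full degree. Granting this, each copy is colored using precisely the list-chromatic guarantee for $G$, and the completed coloring is proper on all edges, both the within-copy edges (handled by $\ch(G)$) and the cross-copy edges (handled by the list deletions).
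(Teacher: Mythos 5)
Your proof is correct: the copy-by-copy greedy sweep along a $\col(H)$-ordering of the copies of $G$, deleting from each list the at most $\col(H)-1$ colors already used on back-neighbouring copies and then invoking the definition of $\ch(G)$ on the reduced lists of size at least $\ch(G)$, charges every cross-copy edge to its later endpoint exactly once and so yields $\ch(G\square H)\leqslant \ch(G)+\col(H)-1$, with the other bound following by symmetry. Note that the paper itself states this theorem only as a quoted result from \cite{Borowiecki2006} and gives no proof, so there is nothing in-paper to compare against; your argument is essentially the standard proof appearing in that reference.
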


Here $\col(G)$ is the \emph{coloring number} of $G$, i.e. the smallest integer $k$ for which there exists an ordering of vertices $v_1,\dots,v_n$ of $G$ such that each vertex $v_i$ is adjacent to at most $k-1$ vertices among $v_1,\dots,v_{i-1}$.

Here we continue the previous work
\cite{li2019alontarsi} where the toroidal
grid $C_n\square C_m$ (here $C_n$ is a 
simple cycle with $n$ edges) was considered and it was proved that
$\AT(C_{n}\square C_{2k})=3$.

An explicit form of Combinatorial Nullstellensatz (the coefficient formula) was used in \cite{li2019alontarsi}, such approach does not seem to work
in the more general setting of the present paper.

We call a coefficient
$\left[\x^\xi\right]F_G(\x)$ of 
the graph polynomial $F_G$ \emph{central},
if $\xi_i=\deg_G(v_i)/2$ for all $i$, 
and \emph{almost central}, if  $|\xi_i-\deg_G(v_i)/2|\leqslant 1$ for all $i$.

Our main result is the following

\begin{theorem}\label{thm-main}
Let $G$ be a graph, all vertices in which have even degree. Suppose that the graph polynomial $F_G$ has at least one non-zero almost central coefficient. Then for $H=G\square C_{2k}$ the central coefficient is non-zero.
In particular,
$H$ is $(\deg_H/2+1)$-choosable and
\[
\ch(H)\leqslant \AT(H)\leqslant 
\frac{\Delta(H)}{2}+1=
\frac{\Delta(G)}{2}+2.
\]
\end{theorem}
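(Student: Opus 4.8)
The plan is to compute the central coefficient of $F_H$ exactly and exhibit it as a nonzero integer via a transfer matrix. Write $x_{v,i}$ for the variable at vertex $(v,i)$, $v\in V$, $i\in\mathbb Z/2k\mathbb Z$, set $\mathbf x_{\cdot,i}=(x_{v,i})_v$, and split the edges of $H=G\square C_{2k}$ into the two types:
\[
F_H=\prod_{i\in\mathbb Z/2k\mathbb Z}F_G(\mathbf x_{\cdot,i})\cdot\prod_{v\in V}\prod_{i\in\mathbb Z/2k\mathbb Z}(x_{v,i+1}-x_{v,i}).
\]
The first product groups the variables by the copy $i$ of $G$, the second by the vertex $v$ (a cycle polynomial $F_{C_{2k}}$ in each). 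Since $\deg_H(v,i)=2d(v)+2$, the central monomial $\prod_{v,i}x_{v,i}^{d(v)+1}$ has top degree $|E(H)|$. Extracting its coefficient by convolution and writing $d(v)+\epsilon_{v,i}$ for the exponent taken from $F_G$ in copy $i$, the cofactor exponent in the $v$-cycle is $1-\epsilon_{v,i}$; as cycle exponents lie in $\{0,1,2\}$ only $\epsilon_{v,i}\in\{-1,0,1\}$ survive, giving
\[
[\text{central}]F_H=\sum_{\epsilon}\Big(\prod_i a_G(\epsilon^{(i)})\Big)\Big(\prod_v W(\epsilon_{v,\cdot})\Big),
\]
where $a_G(\epsilon^{(i)})=[\prod_v x_v^{d(v)+\epsilon_{v,i}}]F_G$ is an almost central coefficient and $W(\epsilon_{v,\cdot})=[\prod_i y_i^{1-\epsilon_{v,i}}]F_{C_{2k}}$.

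Next I would linearise the cycle weights. Expanding $F_{C_{2k}}=\prod_i(y_{i+1}-y_i)$ by choosing in each factor the head ($s_i=1$, sign $+$) or the tail ($s_i=0$, sign $-$) gives, for $s\in\{0,1\}^{\mathbb Z/2k\mathbb Z}$, exponent $1+s_{j-1}-s_j$ on $y_j$ and overall sign $(-1)^{\sum_i s_i}$ (using that $2k$ is even), so $W(\epsilon_{v,\cdot})=\sum_{s:\,s_j-s_{j-1}=\epsilon_{v,j}}(-1)^{\sum_j s_j}$. Substituting and exchanging summation replaces the sum over $\epsilon$ by a sum over $s=(s_{v,i})\in\{0,1\}^{V\times\mathbb Z/2k\mathbb Z}$ with $\epsilon_{v,i}=s_{v,i}-s_{v,i-1}$; distributing the global sign $(-1)^{\sum_{v,i}s_{v,i}}$ over the copies makes the summand factorise along the cycle, and I recognise
\[
[\text{central}]F_H=\tr\!\big(T^{2k}\big),\qquad T_{\sigma,\tau}=(-1)^{|\sigma|}\,a_G\big((\sigma_v-\tau_v)_v\big),\quad \sigma,\tau\in\{0,1\}^V.
\]

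The main obstacle is that the defining sum is signed, so cancellation could a priori make it vanish; the transfer-matrix form is engineered to rule this out by spectral positivity, via two structural inputs. By homogeneity of $F_G$, a nonzero $a_G(\sigma-\tau)$ forces $\sum_v(\sigma_v-\tau_v)=0$, i.e. $|\sigma|=|\tau|$, so $T$ is block diagonal in the Hamming weight $|\sigma|$. Moreover the reciprocity $\prod_v x_v^{\deg_G v}\,F_G(1/\mathbf x)=(-1)^{|E(G)|}F_G(\mathbf x)$ yields $a_G(-\epsilon)=(-1)^{|E(G)|}a_G(\epsilon)$, whence within each block $T_{\tau,\sigma}=(-1)^{|E(G)|}T_{\sigma,\tau}$. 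Thus $T$ is a real symmetric matrix if $|E(G)|$ is even and a real antisymmetric matrix if $|E(G)|$ is odd. In the symmetric case $\tr(T^{2k})=\sum_j\lambda_j^{2k}\geqslant 0$, vanishing only if $T=0$; in the antisymmetric case $\tr(T^{2k})=\tr((T^2)^k)=\sum_j\nu_j^k$ where the eigenvalues $\nu_j$ of $T^2$ are all $\leqslant 0$, so every term has sign $(-1)^k$ and the sum cannot cancel, vanishing only if $T=0$. Finally $T\neq 0$ is exactly the hypothesis that $F_G$ has a nonzero almost central coefficient, since any such $\epsilon$ is realised as $\sigma-\tau$ with $\sigma_v=1$ iff $\epsilon_v=1$ and $\tau_v=1$ iff $\epsilon_v=-1$. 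Hence $[\text{central}]F_H=\tr(T^{2k})$ is a nonzero integer.

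The choosability statement then follows directly from the Combinatorial Nullstellensatz: taking $d_{(v,i)}=\deg_H(v,i)/2=d(v)+1$, with $\deg F_H=|E(H)|=|\mathbf d|$ and the central coefficient nonzero, any lists of size $d(v)+2$ admit a proper list colouring. Therefore $H$ is $(\deg_H/2+1)$-choosable and, using \eqref{choice-and-AT},
\[
\ch(H)\leqslant\AT(H)\leqslant\frac{\Delta(G)}{2}+2=\frac{\Delta(H)}{2}+1.
\]
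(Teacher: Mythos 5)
Your proof is correct and is essentially the paper's own argument: your transfer matrix $T_{\sigma,\tau}=(-1)^{|\sigma|}a_G(\sigma-\tau)$ is exactly the matrix $\Phi$ of Sections 2--3 (restricted to its nonzero block, up to transposition), your reciprocity identity $a_G(-\epsilon)=(-1)^{|E(G)|}a_G(\epsilon)$ is the paper's equation \eqref{symmetry}, and the concluding spectral-positivity step (symmetric or skew-symmetric real matrix, even power, trace vanishes only if the matrix vanishes) is the same. The only difference is presentational: you derive the trace formula by explicitly expanding the cycle factors over head/tail choices, while the paper obtains it from a general coefficient-convolution framework for products $Q(\x)R(\x,\y)$.
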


Note that Theorem~\ref{thm-colbound} gives the bound $\ch(H)\leqslant\min(\ch(G)+2,\col(G)+1)$ under the same conditions. 
When $\ch(G)$ (or $\col(G)$) is small,
this bound is stronger. 
But it can also be weaker when $\ch(G)$ and $\col(G)$ are close to $\Delta(G)$. For example, if $G=C_{2l+1}$ is an odd cycle, then $F_G$ obviously has a non-zero almost central coefficient, so, by Theorem~\ref{thm-main}, $\ch(C_{2l+1}\square C_{2k})\leqslant 3$
(this was also proved in \cite{li2019alontarsi} by a different argument). On the other hand, Theorem~\ref{thm-colbound} gives only $\ch(C_{2l+1}\square C_{2k})\leqslant 4$.

\section{Coefficients as traces}

Let $\a=(a_1,\ldots,a_n), \b=(b_1,\ldots,b_n)\in \mathbb{Z}_{\geqslant 0}^n$; denote $a_1+\dots+a_n=|\a|$. Consider a polynomial
\[
P(\x,\y) = Q(\x)R(\x,\y)\in \mathbb{F}[\x,\y]
\]
in variables $\x=(x_1,\ldots,x_n)$,
$\y=(y_1,\ldots,y_n)$, where $Q$ is of degree at most $|\a|$, $R$ is homogeneous of degree $|\b|$.

Consider $nk$ variables $(\x^1,\x^2,\ldots,\x^k)$,
$\x^i=(x^i_1,\ldots,x^i_n)$; for convenience, denote
$\x^{k+1}\equiv \x^1$.
Define
$$
P_k(\x^1,\x^2,\ldots,\x^k)=
\prod_{1\leqslant j\leqslant k} P(\x^j,\x^{j+1}).
$$
We are interested in the coefficient
$$
M_k:=\left[\prod_{j=1}^k(\x^j)^{\a+\b}\right] P_k.
$$

It is easy to see that this coefficient is equal to
\[
\sum \prod_{j=1}^k \left[(\x^j)^{\p^j}(\x^{j+1})^{\q^j}\right]R(\x^j,\x^{j+1}) \cdot \left[(\x^j)^{\a+\b-\p^j-\q^{j-1}}\right]Q(\x^j)= \tr \Phi^k,
\]
where the sum is over all $(\p^1,\dots,\p^k)$, $(\q^1,\dots,\q^k)$ such that
\[
\p^j=(p^j_1,\dots,p^j_n),\q^j=(q^j_1,\ldots,q^j_n)\in \mathbb{Z}_{\geqslant 0}^n,\quad |\p^j|+|\q^j|=|\b|;
\]
for $\alpha=(\alpha^1,\alpha^2)$ and $\beta=(\beta^1,\beta^2)$; $\alpha^i,\beta^i\in \mathbb{Z}_{\geqslant 0}^n$; $|\alpha^1|+|\alpha^2|=|\beta^1|+\beta^2|=|\b|$,
\[
\Phi(\alpha,\beta) = \left[(\x)^{\alpha^1}(\y)^{\alpha^2}\right]R(\x,\y) \cdot \left[(\x)^{\a+\b-\alpha^1-\beta^2}\right]Q(\x).
\]

If $Q$ is homogeneous, then $\Phi(\alpha,\beta)\neq 0$ only if $|\alpha^1|+|\beta^2|=|\b|$, i.e. if $|\alpha^1|=|\beta^1|$, $|\alpha^2|=|\beta^2|$; the same is true for $\Phi(\beta,\alpha)$.

\section{Cartesian product of a graph and an even cycle}

Let $G$ be a graph, all vertices in which have even degree. Denoting $V(G)=\{v_1,\dots,v_n\}$, we take $a_i=\deg(v_i)/2$, $Q(\x) = F_G(\x)$, $R(\x,\y) = \prod_j (y_j - x_j)$, $b_i = 1$, and let $k$ be even.

Then $\Phi(\alpha, \beta) \neq 0$ only if $\alpha^1_j=1-\alpha^2_j \leq 1,\beta^1_j=1-\beta^2_j\leq 1$ for all $j$, $|\alpha^1|=|\beta^1|$; if this is the case, then
\[
\Phi(\alpha,\beta) = (-1)^{|\alpha^1|}\left[(\x)^{\a+\b-\alpha^1-(\b-\beta^1)}\right]F_G(\x)=(-1)^{|\alpha^1|}\left[(\x)^{\a+\beta^1-\alpha^1}\right]F_G(\x).
\]
Note that
\begin{equation}\label{symmetry}
    \left[\x^{\a+\beta^1-\alpha^1}\right]F_G(\x)=
(-1)^{|E(G)|} \left[\x^{\a+\alpha^1-\beta^1}\right]F_G(\x),
\end{equation}
since simultaneously changing the choice in each linear factor $x_i-x_j$ of $F_G$ we get one of these monomials from the other. Thus, matrix $\Phi$ is (skew-)symmetric; therefore
all eigenvalues of the matrix are real (or all are imaginary). Then the $k$-th powers of all non-zero eigenvalues are real and have the same sign. It follows that $\tr\Phi^k\neq 0$ if at least one of the coefficients of the form
\[
\left[\x^{\a+\beta^1-\alpha^1}\right]F_G(\x)
\]
is non-zero; in other words, if there is at least one non-zero coefficient $\left[\x^\xi\right]F_G(\x)$ with $|\xi_i-\deg(v_i)/2|\leqslant 1$ for all $i$. Theorem \ref{thm-main} is proved.

\begin{remark}\label{remark}
Define a generalized graph polynomial 
$Q$ for a graph or multigraph $G=(V,E)$
with even degrees, 
$V=\{v_1,\ldots,v_n\}$,
as a product of factors $x_i\pm x_j$
for all edges $v_iv_j\in E$ (one multiple
for each edge). Note that it satisfies the symmetry or antisymmetry property \eqref{symmetry}, with some multiple 
$\pm 1$ on the place
of $(-1)^{|E(G)|}$. Therefore the same
argument shows that if $Q$ has
a non-zero almost central coefficient, then the polynomial
\begin{equation}\label{gengr}
\prod_{i=1}^{2k}
Q(\x^i,\x^{i+1})\prod_{i=1}^{2k} \prod_{j=1}^n (x^i_j-x^{i+1}_j),\,\, \text{where}\,\,\x^{2k+1}\equiv \x^1,
\end{equation}
has a non-zero central coefficient
(that is, a coefficient of $\prod_{i,j}
(x^i_j)^{\deg(v_j)/2+1}$).
\end{remark}

\section{Applications}

\subsection{Cartesian product of several cycles}

Consider a Cartesian product of odd cycles $G=C_{2k_1+1}\square\dots\square C_{2k_n+1}$, such that
\[
\frac{1}{k_1}+\dots+\frac{1}{k_n}\leqslant 1.
\]

Our goal is to show that the graph polynomial $F_G$ has a non-zero almost central coefficient. We employ the Alon-Tarsi method:

\begin{theorem}[see Corollary 1.2, Corollary 2.3 in \cite{Alon1992}]
Let $G$ be a non-directed graph on vertices $v_1,\dots,v_n$. Suppose $G$ has an orientation $D$ with outdegrees $d_{out}(v_i)=d_i$, and there are no odd directed cycles in $D$. Then the coefficient $[\x^\d]F_G$ (where $\d=(d_1,\dots,d_n)$) is non-zero.
\end{theorem}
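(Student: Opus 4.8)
The plan is to read the coefficient $[\x^\d]F_G$ as a signed count of orientations of $G$, and then use the absence of odd directed cycles to show this signed count cannot vanish. First I would fix the sign ambiguity of $F_G$ by reading each factor through the reference orientation $D$: for a directed edge $v_i\to v_j$ of $D$, take the corresponding factor as $(x_i-x_j)$, so that $F_G(\x)=\prod_{v_i\to v_j}(x_i-x_j)$. Expanding this product, each factor contributes either $+x_i$ (the tail) or $-x_j$ (the head). A choice made at every edge produces one monomial together with a sign $(-1)^{r}$, where $r$ is the number of edges at which the head was selected. Selecting the head at an edge amounts to reversing that edge, so each term of the expansion corresponds to an orientation $D'$ obtained from $D$ by reversing a set $S$ of edges, with sign $(-1)^{|S|}$, and the exponent of $x_i$ in the monomial equals the out-degree of $v_i$ in $D'$. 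Hence
\[
[\x^\d]F_G=\sum_{D'}(-1)^{|S(D')|},
\]
the sum being over orientations $D'$ whose out-degree sequence equals $\d$, with $S(D')$ the set of edges reversed relative to $D$.

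Next I would determine which reversal sets $S$ preserve the out-degree sequence. Reversing the edges of $S$ changes the out-degree of each vertex $v$ by the in-degree of $v$ within $S$ minus the out-degree of $v$ within $S$; this vanishes for every $v$ precisely when $S$, viewed as a subdigraph of $D$, is \emph{balanced}, i.e. has equal in- and out-degree at each vertex. Therefore the coefficient rewrites as $\sum_S(-1)^{|S|}$, the sum over all balanced subdigraphs $S$ of $D$. Writing $EE(D)$ and $EO(D)$ for the numbers of balanced subdigraphs with an even, respectively odd, number of edges, we get $[\x^\d]F_G=\pm(EE(D)-EO(D))$.

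The decisive step, and the place where the hypothesis enters, is to show that $EO(D)=0$. Every balanced subdigraph decomposes into edge-disjoint directed cycles, so its total number of edges is the sum of the lengths of these cycles. Since by assumption $D$ has no odd directed cycle, every such cycle has even length, whence every balanced subdigraph has an even number of edges and $EO(D)=0$. As the empty subdigraph is balanced and even, $EE(D)\geqslant 1$, and therefore $[\x^\d]F_G=\pm EE(D)\neq 0$, as required. I expect the only genuinely careful points to be the verification that the out-degree-preserving reversal sets are exactly the balanced subdigraphs, and the cycle-decomposition lemma that powers the parity argument; once the correspondence between monomials and orientations is in place, the remaining sign bookkeeping is routine.
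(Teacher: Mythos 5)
Your proof is correct: the translation of $[\x^\d]F_G$ into a signed count of orientations, the observation that out-degree-preserving reversal sets are exactly the balanced (Eulerian) subdigraphs, and the parity argument showing $EO(D)=0$ in the absence of odd directed cycles constitute precisely the classical Alon--Tarsi argument. The paper states this theorem without proof, citing \cite{Alon1992}, and your argument is essentially the one in that reference, so the approaches coincide.
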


We are going to build an orientation of $G$, such that the outdegree of any vertex lies in $\{n-1,n,n+1\}$, and there are no odd directed cycles. We are going to denote vertices of $G$ by $v=(v_1,\dots,v_n)$, $0\leqslant v_i\leqslant 2k_i$.

We divide $G$ into $2^n$ boxes $H_i$, $0\leqslant i<2^n$: if the binary notation of $i$ is $\overline{b_{i,1}\dots b_{i,n}}$, then
\[
H_i=\{(v_1,\dots,v_n)\ |\ 0\leqslant v_j\leqslant k,\text{ if } b_{i,j}=0;\ k<v_j\leqslant 2k \text{ otherwise}\}.
\]

These boxes may be 
colored alternately white and black (in a chess order).
We direct all edges sticking out of 
black boxes outward, and 
from white boxes inward.
 Note that any directed cycle is contained is some 
 box $H_i$ and has therefore even length.

The remaining task is to obtain the orientation of the box of dimension $n$ with outdegree of any vertex lying in $\{n-1,n\}$. We will use this orientation for all white boxes $H_i$'s, for
the
black boxes use the reversed orientation. This guarantees that
in white boxes all outdegrees are in $\{n-1,n\}$;
in black boxes the indegrees are in $\{n-1,n\}$,
therefore the outdegrees are in $\{n,n+1\}$.

To prove the existence of such orientation we are going to use the following theorem (see, for example, Theorem 3 in \cite{Frank1976HowTO}):

\begin{theorem}\label{thm-orient}
There exists an orientation of $G=(V,E)$ with $l_v\leqslant d_{out}(v)\leqslant u_v$ for any $v$, if and only if for any $W\subset V$ the following two conditions hold:
\begin{enumerate}
\item $|E(W)|\leqslant \sum_{v\in W} u_v$;
\item $|\overline{E}(W)|\geqslant \sum_{v\in W} l_v$,
\end{enumerate}
where $\overline{E}(W)=E(V)\setminus E(V\setminus W)$ is the set of edges incident to at least one vertex of $W$.
\end{theorem}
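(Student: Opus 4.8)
The plan is to read an orientation as a choice, for each edge, of which of its two endpoints is the \emph{tail}, so that $d_{out}(v)$ is simply the number of edges whose tail is $v$. Necessity is then a direct count. Fix $W\subseteq V$ and an orientation with $l_v\leqslant d_{out}(v)\leqslant u_v$. Each edge of $E(W)$ has both endpoints in $W$, hence its tail in $W$, so it contributes $1$ to $\sum_{v\in W}d_{out}(v)$; as these contributions come from distinct edges we get $|E(W)|\leqslant\sum_{v\in W}d_{out}(v)\leqslant\sum_{v\in W}u_v$, which is condition~1. Conversely, every edge whose tail lies in $W$ is incident to $W$ and hence lies in $\overline{E}(W)$, so $\sum_{v\in W}l_v\leqslant\sum_{v\in W}d_{out}(v)\leqslant|\overline{E}(W)|$, which is condition~2.

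For sufficiency the main tool I would use is \emph{directed path reversal}: if $P$ is a directed path from $w$ to $v$ in an orientation $D$, then reversing all arcs of $P$ yields an orientation in which $d_{out}(v)$ increases by $1$, $d_{out}(w)$ decreases by $1$, and every other out-degree is unchanged. Starting from an arbitrary orientation, I would correct the degrees in two monotone phases. In the first phase I repair the upper bounds: while some $v$ has $d_{out}(v)>u_v$, let $R$ be the set of vertices reachable from $v$. No arc leaves $R$, so every non-internal edge incident to $R$ points into $R$, and therefore $\sum_{w\in R}d_{out}(w)=|E(R)|$. If every $w\in R$ satisfied $d_{out}(w)\geqslant u_w$, condition~1 would force $d_{out}(w)=u_w$ throughout $R$, contradicting $d_{out}(v)>u_v$; hence some $w\in R$ has $d_{out}(w)<u_w$. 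Reversing a path from $v$ to $w$ lowers $d_{out}(v)$ and keeps $w$ within its upper bound, strictly decreasing $\sum_v\max(0,d_{out}(v)-u_v)$. This phase terminates with $d_{out}\leqslant u$ everywhere.

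In the second phase I repair the lower bounds while preserving $d_{out}\leqslant u$. While some $v$ has $d_{out}(v)<l_v$, I take the set $R$ of vertices that can \emph{reach} $v$; now no arc enters $R$ from outside, so every edge incident to $R$ has its tail in $R$ and $\sum_{w\in R}d_{out}(w)=|\overline{E}(R)|$. Condition~2 together with $d_{out}(v)<l_v$ then forces some $w\in R$ with $d_{out}(w)>l_w$. Reversing a path from $w$ to $v$ raises $d_{out}(v)$ and lowers $d_{out}(w)$ while keeping it $\geqslant l_w$; since the only out-degree that increases is that of $v$, which was below $l_v\leqslant u_v$, no upper bound is broken, and the deficiency $\sum_v\max(0,l_v-d_{out}(v))$ strictly drops. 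When this phase ends we have $l\leqslant d_{out}\leqslant u$, as required. The step I expect to be the crux is precisely the existence of the exchange vertex $w$ inside the reachability set: it is there that the closed-neighbourhood structure of $R$ (no arcs leaving, resp.\ entering) turns the global conditions~1 and~2 into a local certificate, and it is the fact that the second phase never disturbs the upper bounds secured in the first that lets the two one-sided conditions hold simultaneously.
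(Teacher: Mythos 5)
Your proposal is correct, and in fact there is nothing in the paper to compare it against: the paper does not prove this theorem, it quotes it as a known result with a pointer to Theorem~3 of Frank's paper. Your argument therefore supplies a self-contained elementary proof of what the paper treats as a black box. The necessity count is the standard one, and the sufficiency argument is a valid two-phase discharging proof: starting from an arbitrary orientation, you repair the upper bounds and then the lower bounds by reversing directed paths, and in each phase the reachability set $R$ of a violating vertex $v$ gives the identity $\sum_{w\in R}d_{out}(w)=|E(R)|$ (no arc leaves $R$), respectively $\sum_{w\in R}d_{out}(w)=|\overline{E}(R)|$ (no arc enters $R$), which together with condition~1, respectively condition~2, produces the partner vertex $w$ with slack. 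The potentials $\sum_v\max(0,d_{out}(v)-u_v)$ and $\sum_v\max(0,l_v-d_{out}(v))$ strictly decrease, so both phases terminate, and your observation that phase~2 never raises any out-degree except that of a vertex currently below its lower bound is exactly what lets the two one-sided repairs compose. This is more elementary than the flow/Hall-type derivations one finds in the literature Frank cites, though it proves the same statement.

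One caveat worth recording: in phase~2 you use $l_v\leqslant u_v$, which is not literally among the hypotheses as the paper states them. This is not a gap in your proof but an implicit hypothesis of the statement itself, which is false without it: for a path $a$--$v$--$b$ with $u_a=u_b=1$, $l_a=l_b=0$, $l_v=2$, $u_v=1$, both conditions hold for every $W$, yet no orientation can satisfy $2\leqslant d_{out}(v)\leqslant 1$. In the paper's application one has $l_v=n-1$ and $u_v=n$, so the assumption is harmless there.
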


\begin{proposition}
Let $H=P_{k_1}\square\dots\square P_{k_n}$ ($P_i$ is a path of length $i$). There exists an orientation of $H$ with outdegrees of all vertices lying in $\{n-1,n\}$ if and only if
\begin{equation}\label{cond}
\frac{1}{k_1}+\dots+\frac{1}{k_n}\leqslant 1.
\end{equation}
\end{proposition}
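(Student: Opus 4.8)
The plan is to apply Theorem~\ref{thm-orient} with the uniform bounds $l_v=n-1$ and $u_v=n$ at every vertex $v$ of $H$. I use that $H$ has $|V(H)|=\prod_{j}k_j$ vertices and $|E(H)|=\sum_{j}(k_j-1)\prod_{i\neq j}k_i$ edges, so that $|E(H)|/|V(H)|=n-\sum_j 1/k_j$. The necessity of \eqref{cond} is then immediate by double counting: the outdegrees of any orientation sum to $|E(H)|$, so if each of them is at least $n-1$ we get $|E(H)|\geqslant (n-1)|V(H)|$, i.e. $n-\sum_j 1/k_j\geqslant n-1$, which is exactly \eqref{cond}. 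Only the sufficiency requires work, and for it I will check the two conditions of Theorem~\ref{thm-orient}. Condition~1, namely $|E(W)|\leqslant n|W|$ for all $W$, is routine: every vertex of $H$ has degree at most $2n$, hence $2|E(W)|=\sum_{v\in W}\deg_W(v)\leqslant 2n|W|$.

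Condition~2, that $|\overline E(W)|\geqslant (n-1)|W|$ for all $W$, is where \eqref{cond} must enter, and I expect this to be the crux. The idea is to decompose the edges by coordinate direction. For a fixed $j$ the vertex set of $H$ is partitioned into \emph{$j$-fibers}, each an isomorphic copy of $P_{k_j}$, and every $j$-edge lies in exactly one such fiber. In a path on $m$ vertices, a subset of size $s$ is incident to at least $s$ edges when $s<m$, and to all $m-1$ edges when $s=m$. Writing $N_j$ for the number of $j$-fibers entirely contained in $W$ and summing this fiber bound over all $j$-fibers (in which $W$ occupies in total $|W|$ vertices), I obtain that the $j$-edges incident to $W$ number at least $|W|-N_j$, the deficit of one arising precisely from each full fiber. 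Summing over $j$ gives $|\overline E(W)|\geqslant n|W|-\sum_j N_j$.

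The remaining step is to bound $\sum_j N_j$, and this is exactly where the hypothesis is used. For fixed $j$ the full $j$-fibers are pairwise disjoint and each contains $k_j$ vertices of $W$, so $N_j k_j\leqslant |W|$, that is $N_j\leqslant |W|/k_j$. Therefore $\sum_j N_j\leqslant |W|\sum_j 1/k_j\leqslant |W|$ by \eqref{cond}, whence $|\overline E(W)|\geqslant (n-1)|W|$. Both conditions of Theorem~\ref{thm-orient} then hold, and the desired orientation exists.

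I expect the only real obstacle to be Condition~2: the naive per-coordinate estimate $|\overline E_j(W)|\geqslant |W|$ breaks down exactly on the fibers that $W$ fills completely, so one must control the total deficit $\sum_j N_j$. The key point is that a full fiber in direction $j$ is ``costly'', consuming $k_j$ vertices of $W$, so there can be at most $|W|/k_j$ of them; it is precisely the summability assumption $\sum_j 1/k_j\leqslant 1$ that forces the total deficit down to at most $|W|$. Everything else is elementary counting.
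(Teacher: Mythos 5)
Your proof is correct and follows essentially the same route as the paper: necessity by the identical double count, and sufficiency by verifying the two conditions of Theorem~\ref{thm-orient} with $l_v=n-1$, $u_v=n$ via the decomposition of edges into coordinate fibers. The only difference is presentational: the paper checks the second condition on the complement $U=V\setminus W$ (bounding induced edges $|E(U)|\leqslant n|U|-\#\{\text{nonempty fibers}\}$ and using that each fiber holds at most $k_i$ vertices of $U$), whereas you bound the edges incident to $W$ directly via the full-fiber count $N_j\leqslant |W|/k_j$ --- dual formulations of the same counting step.
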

\begin{proof}
First of all, note that the condition~\eqref{cond} is necessary: the sum of outdegrees of all vertices does not exceed the number of edges in a graph, so
\begin{equation}\label{sumdegs}
(n-1)\prod_{i=1}^n k_i\leqslant \sum_{i=1}^n (k_i-1)\prod_{\substack{1\leqslant j\leqslant n\\ j\neq i}}k_i=\prod_{i=1}^n k_i\cdot \sum_{i=1}^n\left(1-\frac{1}{k_i}\right),
\end{equation}
which is equivalent to \eqref{cond}.
To show that the condition~\eqref{cond} is sufficient, we are going to verify two conditions from Theorem~\ref{thm-orient} with $l(v)=n-1$, $u(v)=n$ for each $v$.
The first condition holds:
\[
|E(W)|\leqslant \frac{1}{2}\sum_{v\in W} d(v)\leqslant n|W|.
\]

The second condition looks like
\[
|E(V)|-|E(V\setminus W)|\geqslant (n-1)|W|.
\]
Denoting $U=V\setminus W$, it is equivalent to
\[
|E(U)|-(n-1)|U|\leqslant |E(V)|-(n-1)|V|
\]
for each $U\subset V$. Thus, to prove that the second condition holds, it is sufficient to show that the function
\[
f(U)=|E(U)|-(n-1)|U|
\]
reaches its maximum value at $U=V$.

For $1\leqslant i\leqslant n$, $p=(p_1,\dots,p_{i-1},p_{i+1},\dots,p_n)$, $1\leqslant p_j\leqslant k_j$, denote
\[
U(i,p)=\{v\in U\ |\ v_j=p_j \text{ for any } j\neq i\}.
\]
Then
\[
|E(U)|\leqslant n|U|-\sum_{i,p}\chi(|U(i,p)|>0).
\]
Denote $l=1-\sum_{i=1}^n\frac{1}{k_i}\geqslant 0$; then
\[
|U|=\left(\sum_{i=1}^n\frac{1}{k_i}+l\right)|U|=l|U|+\sum_{i,p}\frac{|U(i,p)|}{k_i}.
\]
It follows that
\[
f(U)\leqslant l|U|+\sum_{i,p} g(U,i,p),
\]
where
\[
g(U,i,p)=\begin{cases}
0,          & \text{ if } |U(i,p)|=0,\\
\frac{|U(i,p)|}{k_i}-1, & \text{ otherwise.}
\end{cases}
\]
In conclusion, note that
\[
f(U)\leqslant l|U|+\sum_{i,p}g(U,i,p)\leqslant l|U|\leqslant l|V|=f(V).
\]
\end{proof}

\begin{corollary}
Let $G=C_{2k_1+1}\square\dots\square C_{2k_m+1}\square C_{2k_{m+1}}\square\dots\square C_{2k_n}$, $0\leqslant m<n$,
and, additionally,
\[
\frac{1}{k_1}+\dots+\frac{1}{k_m}\leqslant 1.
\]
Then
\[
\ch(G)\leqslant \AT(G)=n+1.
\]
\end{corollary}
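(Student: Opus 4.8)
The equality $\AT(G)=n+1$ together with $\ch(G)\leqslant\AT(G)$ (which is exactly \eqref{choice-and-AT}) reduces to two matching bounds on $\AT(G)$. I would dispose of the lower bound first. Since $G$ is an $n$-fold Cartesian product of cycles, every vertex has degree exactly $2n$, so $F_G$ is homogeneous of degree $|E(G)|=n|V(G)|$. Consequently every monomial $\x^\d$ with $[\x^\d]F_G\neq 0$ satisfies $|\d|=n|V(G)|$, whence $\max_v d_v\geqslant n$ by averaging. This gives $\AT(G)\geqslant n+1$. It then remains to prove $\AT(G)\leqslant n+1$, which follows as soon as we exhibit a non-zero \emph{central} coefficient of $F_G$, namely the monomial with $d_v=\deg(v)/2=n$ for every $v$.

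To obtain this central coefficient the plan is to assemble $G$ one cycle at a time and invoke Theorem~\ref{thm-main} at every step. The observation that makes the iteration run is that a central coefficient is a fortiori almost central: as soon as Theorem~\ref{thm-main} tells us that some product $H$ of cycles has a non-zero central coefficient, $H$ again meets the hypotheses of that theorem (all degrees even, a non-zero almost central coefficient present), so we may attach one more even cycle and repeat.

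Concretely, I would choose as base $B=C_{2k_1+1}\square\dots\square C_{2k_m+1}$ if $m\geqslant 1$ and $B=C_{2k_1}$ if $m=0$; in both cases $B$ has only even degrees, and $G$ is obtained from $B$ by attaching the remaining even cycles $C_{2k_i}$ successively. Provided $F_B$ has a non-zero almost central coefficient, repeated application of Theorem~\ref{thm-main} yields a non-zero central coefficient of every partial product, and finally of $G$, as required. For $m=0$ the base case is immediate: in $F_{C_{2k_1}}$ each variable lies in exactly two linear factors, so every occurring monomial has all exponents in $\{0,1,2\}$ and is therefore almost central, and $F_{C_{2k_1}}\not\equiv 0$. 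For $m\geqslant 1$ it is precisely the construction preceding the Corollary: orienting the $2^m$ boxes in chessboard fashion confines every directed cycle to a single box (hence to a bipartite graph, so it is of even length), while the Proposition --- applicable exactly because $\frac1{k_1}+\dots+\frac1{k_m}\leqslant 1$ --- orients each box with all outdegrees in $\{m-1,m\}$, giving a global orientation of the $m$ odd cycles with outdegrees in $\{m-1,m,m+1\}$ and no odd directed cycle; the Alon--Tarsi theorem then makes the corresponding (almost central, since $\deg(v)/2=m$) coefficient non-zero.

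The only genuinely delicate point is this base case for $m\geqslant 1$, where one must simultaneously (i) eliminate all odd directed cycles and (ii) keep each outdegree within $1$ of the centre $m$; these are handled respectively by the chessboard orientation of the boxes and by the Proposition, and it is (ii) that forces the reciprocal condition $\frac1{k_1}+\dots+\frac1{k_m}\leqslant 1$. Everything else --- the lower bound, the reduction to a central coefficient, and the peeling off of the even cycles --- is routine given Theorem~\ref{thm-main} and the remark that central coefficients are in particular almost central.
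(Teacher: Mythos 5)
Your proposal is correct and takes essentially the same route as the paper: the lower bound by degree counting (every monomial of the homogeneous polynomial $F_G$ must contain a variable of exponent at least $n$), and the upper bound by feeding the box-orientation construction for the odd-cycle part (or a trivial base when $m=0$) into Theorem~\ref{thm-main}, iterated over the even cycles via the observation that a central coefficient is in particular almost central. The paper's proof is merely a terser statement of exactly this argument, so there is nothing to add.
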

\begin{proof}
The upper bound follows from the construction described above; the lower bound is obvious: in each monomial of the graph polynomial $F_G$ there is a variable of degree at least $n$.
\end{proof}

Note that even though the result is sharp for the Alon--Tarsi number, it is far from sharp for the list chromatic number
when $n$ is large enough. For example, if $k_i\geqslant 2$ for all 
$i=1,\ldots,m$
(it is so for sure if $m\geqslant 2$ and 
$\sum 1/k_i\leqslant 1$), then graph $G$ is triangle free and has maximum degree $2n$, which yields $\ch(G)\leqslant (2+o(1))\frac n{\log n}$ by a result of Molloy \cite{Molloy2019}, a recent improvement of the
$O(\frac{n}{\log n})$ bound first given by Johansson \cite{johansson}.

\subsection{Powers of cycles}

\begin{proposition}
Let $C_n^p$ be $p$-th power of a cycle $C_n$, i.e. a graph on the vertex set $\{v_1,\dots,v_n\}$, in which $v_i$ and $v_j$ are adjacent if and only if $j\in\{i-p,\dots,i-1,i+1,\dots i+p\}$ (the indices are modulo $n$). Suppose $p+1$ divides $n$ or $n\geqslant p(p+1)$. Then
\[
\ch(C_n^p\square C_{2k})\leqslant \AT(C_n^p\square C_{2k})\leqslant p+2.
\]
\end{proposition}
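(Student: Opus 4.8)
The plan is to deduce the statement from Theorem~\ref{thm-main}. For $n>2p$ the graph $C_n^p$ is $2p$-regular, so all its degrees are even and $\Delta(C_n^p)/2+2=p+2$; thus it suffices to show that $F_{C_n^p}$ has a non-zero \emph{almost central} coefficient, i.e. a non-zero $[\x^\xi]F_{C_n^p}$ with every $\xi_i\in\{p-1,p,p+1\}$, and then Theorem~\ref{thm-main} gives the bound on $\ch$ and $\AT$. (The only hypothesis point with $n\le 2p$ is the degenerate case $p+1\mid n$ with $n=p+1$, where $C_n^p=K_{p+1}$; there $\ch(K_{p+1}\square C_{2k})\le p+2$ already follows from Theorem~\ref{thm-colbound}, since $\col(K_{p+1})=p+1$.) By the orientation criterion of Alon and Tarsi quoted above, producing such a coefficient reduces to producing an orientation $D$ of $C_n^p$ in which every outdegree lies in $\{p-1,p,p+1\}$ and which contains no odd directed cycle.

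First I would reformulate ``no odd directed cycle''. A standard argument, assigning to each vertex the parity of the length of a directed path from a fixed root within its strongly connected component, shows that a digraph has no odd directed cycle if and only if every strongly connected component induces a bipartite subgraph. So the task becomes: orient $C_n^p$ with all outdegrees in $\{p-1,p,p+1\}$ so that each strongly connected component is bipartite. Note that a fully acyclic orientation is hopeless here, since in any acyclic orientation the source vertex inherits outdegree equal to its degree $2p>p+1$; genuine directed cycles are therefore unavoidable, and every one of them must be even.

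The construction would start from the block structure coming from the proper colouring $c(i)=i\bmod(p+1)$: group the vertices into consecutive blocks of size $p+1$, each spanning a clique $K_{p+1}$, orient every such clique transitively (by position), and orient the edges between consecutive blocks ``forward''. A direct count shows that the vertex in position $a$ of a block then emits $p-a$ intra-block arcs and $a$ forward inter-block arcs, giving outdegree exactly $p$ everywhere. This naive orientation, however, still has odd directed cycles — already a short triangle on equally spaced vertices winding once around $C_n$ can be odd. Here I would spend the $\pm1$ slack in the outdegrees to repair parity: the idea is to single out one induced \emph{even} cycle to serve as the unique recurrent part (for $p+1\mid n$ one can take the $2q$-cycle, $q=n/(p+1)$, formed by the first two vertices of each block, which is genuinely induced and bipartite), orient it as a directed even cycle, and arrange all remaining vertices to flow acyclically into and out of it, reversing only a bounded set of edges near the block boundaries so that each outdegree moves by at most one. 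The hypotheses $p+1\mid n$ and $n\ge p(p+1)$ are precisely what create enough room to place this even recurrent cycle and to redistribute the outdegree surplus, mirroring the edge-counting bound \eqref{sumdegs} of the previous subsection.

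The step I expect to be the main obstacle is exactly this parity control. Because acyclicity is impossible, some strongly connected component must carry an even directed cycle, and \emph{all} recurrent structure must stay bipartite inside the rather dense graph $C_n^p$, whose maximal induced bipartite subgraphs are small (each window of $p+1$ consecutive vertices is a clique and can contribute at most two vertices). Verifying that the repaired orientation simultaneously keeps every outdegree in $\{p-1,p,p+1\}$ and confines all cycles to bipartite components — and that this is achievable exactly under the stated numerical conditions, the boundary $n=p(p+1)$ being tight — is the delicate part of the argument.
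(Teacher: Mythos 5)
Your reduction is the same as the paper's: since $C_n^p$ is $2p$-regular, it suffices to exhibit a non-zero almost central coefficient of $F_{C_n^p}$ and apply Theorem~\ref{thm-main}. The gap is that you never exhibit one: the entire content of the statement is concentrated in the step you explicitly defer (``the delicate part of the argument''), namely constructing an orientation with all outdegrees in $\{p-1,p,p+1\}$ and no odd directed cycle. Worse, the specific repair plan you sketch provably cannot work once $p\geqslant 6$. Suppose, as you propose, that the unique non-singleton strongly connected component is (a bipartite induced subgraph containing) your even cycle $R$ built from two vertices per block. Then the subdigraph induced on $V\setminus R$ is acyclic, hence has a sink $v$ all of whose out-arcs --- at least $p-1$ of them --- must land in the recurrent part. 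But the neighbourhood of $v$ is a window of $2p+1$ consecutive vertices, and such a window meets any induced bipartite subgraph of $C_n^p$ in at most $4$ vertices (at most two per clique-window of length $p+1$; you note this clique obstruction yourself). So $p-1\leqslant 4$, and the plan with a single recurrent component is impossible for $p\geqslant 6$; any viable orientation needs several recurrent components, for which you give no construction. In addition, your block structure presupposes $p+1\mid n$: in the complementary case $n\geqslant p(p+1)$, $p+1\nmid n$, consecutive blocks of size $p+1$ do not tile the cycle, and for that case you offer nothing concrete beyond the remark that the hypothesis ``creates enough room''.

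The paper sidesteps all of this by quoting Prowse and Woodall: for $p+1\mid n$ they proved that the central coefficient of $F_{C_n^p}$ is non-zero, and for $n\geqslant p(p+1)$ they proved that a graph $H_n^p$, obtained from $C_n^p$ by adding a matching, has a non-zero coefficient with every degree in $\{p,p+1\}$; expanding the matching factors writes that coefficient as a linear combination of almost central coefficients of $F_{C_n^p}$, so at least one of those is non-zero. If you want a self-contained argument along your lines, that is essentially the result you would have to reprove, and it is a genuine piece of work rather than a routine patch. A smaller point: in your degenerate case $n=p+1$ you invoke Theorem~\ref{thm-colbound}, which bounds $\ch$ only, so it does not give the claimed inequality $\AT(K_{p+1}\square C_{2k})\leqslant p+2$; that case follows instead from Corollary~\ref{nine}.
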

\begin{proof}
In \cite{prowsewoodall} the Alon--Tarsi number 
$\AT$ for powers of cycles is estimated. If $p+1$ divides $n$, it is shown that the central coefficient of $F_{C_n^p}$ is non-zero; if $p+1$ does not divide $n$, but $n\geqslant p(p+1)$, then it is shown that for a graph $H_n^p$, obtained by adding some matching to the graph $C_n^p$, there is a non-zero coefficient of $F_{H_n^p}$ with degree of each variable in $\{p,p+1\}$. This coefficient is a linear combination of almost central coefficients of $F_{C_n^p}$, so at least one of them is also non-zero.
\end{proof}

\subsection{Multigraphs}

Note that Theorem~\ref{thm-main} can be applied to multigraphs. In particular, non-trivial bounds can be obtained for graphs with large choice number by adding multiple edges to them. To give an example, we prove the following proposition:

\begin{proposition}\label{6}
Let $G$ be a graph, all vertices of maximum degree in which may be covered by some vertex-disjoint cycles. Then
\[
\AT(G\square C_{2k})\leqslant\Delta(G)+1=\Delta(G\square C_{2k})-1.
\]
\end{proposition}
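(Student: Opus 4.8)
The plan is to realize $G\square C_{2k}$ inside a larger \emph{multigraph} whose central coefficient is accessible through Remark~\ref{remark}, and then read off the Alon--Tarsi bound. Write $\Delta=\Delta(G)$ and let $S\subseteq V(G)$ be the set of vertices of degree $\Delta$; by hypothesis $S$ is covered by a family $\mathcal C$ of vertex-disjoint cycles of $G$ (note $\Delta\ge 2$, since degree-$1$ vertices cannot lie on a cycle). I would build a multigraph $G'$ on $V(G)$ by keeping every edge of $\mathcal C$ with multiplicity one and \emph{doubling} every other edge of $G$. Each $v\in S$ then keeps its two cycle-edges single and has its remaining $\Delta-2$ edges doubled, so $\deg_{G'}(v)=2+2(\Delta-2)=2\Delta-2$; every $v\notin S$ has all edges doubled, so $\deg_{G'}(v)=2\deg_G(v)\le 2\Delta-2$. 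Thus all degrees of $G'$ are even, $\Delta(G')=2\Delta-2$, and $G'$ has the same underlying simple graph as $G$.

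Granting that $F_{G'}$ has a non-zero almost-central coefficient, Remark~\ref{remark} (applied with $Q=F_{G'}$) gives that the corresponding product polynomial has a non-zero central coefficient, namely that of $\prod_{i,j}(x^i_j)^{\deg_{G'}(v_j)/2+1}$. Every exponent here is $\deg_{G'}(v_j)/2+1\le(2\Delta-2)/2+1=\Delta$, so this monomial is an Alon--Tarsi certificate of level $\Delta+1$ for $G'\square C_{2k}$. Since $G'\square C_{2k}$ and $G\square C_{2k}$ share their underlying simple graph, this delivers the asserted bound $\AT(G\square C_{2k})\le\Delta+1$, and $\Delta+1=\Delta(G\square C_{2k})-1$ because $\Delta(G\square C_{2k})=\Delta+2$.

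The entire proposition therefore reduces to the single statement: $F_{G'}$ has a non-zero almost-central coefficient. Here I would exploit the factorization $F_{G'}=F_{\mathcal C}\cdot F_B^{\,2}$, where $B$ is the set of doubled (non-cycle) edges. The perfect square supplies positivity: reflecting each linear factor shows $[\x^{\deg_B}]F_B^2=(-1)^{|B|}\sum_{\mu}\big([\x^{\mu}]F_B\big)^2$, a definite-sign coefficient sitting exactly at the centre of $F_B^2$. On the other side $F_{\mathcal C}=\prod_t F_{C_{\ell_t}}$ is a product over the disjoint cycles, and every cycle—even or odd—has a non-zero almost-central coefficient. Fixing such an exponent $\alpha_0$ of $F_{\mathcal C}$ and taking the target $\xi=\alpha_0+\deg_B$ (which is almost central for $G'$), one computes
\[
[\x^{\xi}]F_{G'}=(-1)^{|B|}\sum_{\alpha}[\x^{\alpha}]F_{\mathcal C}\sum_{\mu}\big([\x^{\mu}]F_B\big)\big([\x^{\mu-(\alpha_0-\alpha)}]F_B\big),
\]
whose $\alpha=\alpha_0$ term equals $(-1)^{|B|}[\x^{\alpha_0}]F_{\mathcal C}\cdot\sum_\mu\big([\x^\mu]F_B\big)^2$, a non-zero quantity of definite sign.

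The main obstacle is precisely the control of the remaining terms $\alpha\ne\alpha_0$: ruling out cancellation of this distinguished diagonal term by the shifted autocorrelations $\sum_\mu[\x^\mu]F_B\,[\x^{\mu-\delta}]F_B$ of $F_B$. This is where I expect the vertex-disjointness of $\mathcal C$ to be decisive: the cycles occupy disjoint variable blocks and each contributes only a local defect $\delta=\alpha_0-\alpha$ with entries in $\{-1,0,1\}$, so I would argue cycle by cycle, peeling one cycle at a time and choosing $\alpha_0$ extremal (say lexicographically maximal in its block) so that no other admissible $\alpha$ can be paired with a non-zero value of $F_B^2$ at the shifted exponent. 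Alternatively, one can bypass the single coefficient entirely and invoke the eigenvalue-positivity argument underlying Theorem~\ref{thm-main}, which replaces it by a manifestly non-negative trace; either route, once some almost-central coefficient of $F_{G'}$ is non-zero, closes the argument.
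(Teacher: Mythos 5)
Your construction of $G'$ (cycle edges kept single, all other edges doubled), the degree count $\Delta(G')=2\Delta-2$, the appeal to Remark~\ref{remark}, and the monotonicity step $\AT(G\square C_{2k})\leqslant\AT(G'\square C_{2k})$ all coincide with the paper's proof. But the step you yourself flag as ``the main obstacle'' is exactly the heart of the matter, and your proposal does not close it: in the expansion
\[
[\x^{\xi}]F_{G'}=(-1)^{|B|}\sum_{\alpha}[\x^{\alpha}]F_{\mathcal C}\,\sum_{\mu}[\x^{\mu}]F_B\,[\x^{\mu-(\alpha_0-\alpha)}]F_B,
\]
the off-diagonal terms carry coefficients $[\x^{\alpha}]F_{\mathcal C}$ of both signs and shifted autocorrelations of $F_B$ that can be comparable in size to the diagonal one, so nothing prevents them from cancelling your distinguished term. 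Choosing $\alpha_0$ lexicographically extremal inside $F_{\mathcal C}$ does not help, because extremality of $\alpha_0$ says nothing about the vanishing of $[\x^{\xi-\alpha}]F_B^2$ for the other admissible $\alpha$; fixing a \emph{specific} target monomial $\xi$ is a strictly harder problem than the one you need. Your fallback is circular: the trace/eigenvalue argument of Theorem~\ref{thm-main} takes a non-zero almost central coefficient as its \emph{hypothesis}, so it cannot be used to produce one.

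The missing idea is to complete the square rather than aim at one monomial. Write $\a$ for the multi-index with $a_i=\deg_G(v_i)/2$, and multiply $F_{G'}$ by $F_{\mathcal C}$ once more: $F_{\mathcal C}\cdot F_{G'}=F_{\mathcal C}^2F_B^2=F_G^2=F_{G''}$, the polynomial of the multigraph $G''$ in which \emph{every} edge of $G$ is doubled. By the reflection symmetry \eqref{symmetry},
\[
\left[\x^{2\a}\right]F_{G''}=\sum_{\mu}[\x^{\mu}]F_G\,[\x^{2\a-\mu}]F_G=(-1)^{|E(G)|}\sum_{\mu}\left([\x^{\mu}]F_G\right)^2\ne 0,
\]
a definite-sign sum of squares over \emph{all} coefficients of $F_G$ --- this is where your positivity observation should be applied, to $F_G^2$ rather than to $F_B^2$ alone. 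On the other hand, expanding $F_{G''}=F_{\mathcal C}\cdot F_{G'}$ gives $\left[\x^{2\a}\right]F_{G''}=\sum_{\nu}[\x^{\nu}]F_{\mathcal C}\,[\x^{2\a-\nu}]F_{G'}$, and every coefficient $[\x^{2\a-\nu}]F_{G'}$ appearing there is almost central for $F_{G'}$: on a cycle vertex, $\nu_i\in\{0,1,2\}$ while $\deg_{G''}(v_i)/2-\deg_{G'}(v_i)/2=1$, and off the cycles $\nu_i=0$ while the two half-degrees agree. A non-zero linear combination forces at least one of its terms to be non-zero, so some almost central coefficient of $F_{G'}$ is non-zero --- and, crucially, you never need to identify which one, so the cancellation problem you ran into never arises.
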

\begin{proof}
Denote the set of edges contained in these cycles as $F$. Consider a graph $G'$, which can be obtained from $G$ by adding a multiple edge to every edge from the set $E(G)\setminus F$ . Obviously, $\AT(G\square C_{2k})\leqslant \AT(G'\square C_{2k})$. If we show that $F_{G'}$ has a non-zero almost central coefficient, then
\[
\AT(G'\square C_{2k})\leqslant \frac{\Delta(G')}{2}+2=\Delta(G)+1.
\]
Consider another graph $G''$, which can be obtained from $G$ by adding a multiple edge to every edge. Note that the central coefficient of $F_{G''}=F^2_G$ is non-zero: the central coefficient of $F_{G''}$ is the sum of products of ``opposite'' coefficients of $F_G$, each summand in this sum has the same sign (which depends on the parity of the number of edges in $G$, cf. \eqref{symmetry}). But the central coefficient of $F_{G''}$ is a linear combination of almost central coefficients of $F_{G'}$; it follows that at least one of them is also non-zero.
\end{proof}

\begin{corollary}\label{nine}
\[
\AT(K_n\square C_{2k})=\ch(K_n\square C_{2k})=n.
\]
\end{corollary}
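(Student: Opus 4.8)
The plan is to deduce Corollary~\ref{nine} by sandwiching the quantity $\AT(K_n\square C_{2k})$ between an elementary lower bound coming from the ordinary chromatic number and the general upper bound supplied by Proposition~\ref{6}. Throughout I assume $n\geqslant 3$ (this restriction is essential, as explained below). The whole argument is a direct combination of results already established, so rather than new estimates it requires only checking that the hypotheses of those results are met.

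First I would handle the lower bound. Since $\chi(G)\leqslant\ch(G)$ by the definitions and $\ch(G)\leqslant\AT(G)$ by~\eqref{choice-and-AT}, it suffices to compute the chromatic number of $K_n\square C_{2k}$. By Sabidussi's formula $\chi(G_1\square G_2)=\max(\chi(G_1),\chi(G_2))$ (Lemma~2.6 in \cite{Sabidussi1957}), together with $\chi(K_n)=n$ and $\chi(C_{2k})=2$, I get
\[
\chi(K_n\square C_{2k})=\max(n,2)=n\qquad(n\geqslant 2),
\]
whence $\AT(K_n\square C_{2k})\geqslant\ch(K_n\square C_{2k})\geqslant n$.

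Next I would establish the matching upper bound by applying Proposition~\ref{6} with $G=K_n$. The only thing to verify is its hypothesis: that all vertices of maximum degree of $K_n$ can be covered by vertex-disjoint cycles. Because $K_n$ is $(n-1)$-regular, \emph{every} vertex is of maximum degree, so I must cover all $n$ vertices. For $n\geqslant 3$ the complete graph $K_n$ is Hamiltonian, and a single Hamiltonian cycle is precisely such a vertex-disjoint cover. Proposition~\ref{6} then yields
\[
\AT(K_n\square C_{2k})\leqslant\Delta(K_n)+1=(n-1)+1=n.
\]
Combining this with the previous paragraph gives $n\leqslant\ch(K_n\square C_{2k})\leqslant\AT(K_n\square C_{2k})\leqslant n$, so all three quantities equal $n$.

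The step most deserving of care is the verification of the cycle-cover hypothesis of Proposition~\ref{6}, and this is exactly where the assumption $n\geqslant 3$ is used, since neither $K_1$ nor $K_2$ contains any cycle. This is not merely a defect of the method: for $n=2$ the graph $K_2\square C_{2k}$ is $3$-regular and bipartite but is not $2$-choosable, so the claimed value $n$ genuinely fails there. For $n\geqslant 3$, however, no genuine obstacle remains — the argument is a substitution into Proposition~\ref{6} and Sabidussi's formula.
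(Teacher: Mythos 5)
Your proof is correct and takes essentially the same route as the paper: the upper bound comes from Proposition~\ref{6} (whose cycle-cover hypothesis you verify with a Hamiltonian cycle of $K_n$ for $n\geqslant 3$) and the lower bound is the trivial chain $\AT\geqslant\ch\geqslant\chi=n$, where the paper instead uses the equally immediate $\ch(K_n\square C_{2k})\geqslant\ch(K_n)\geqslant n$. Your explicit caveat that $n\geqslant 3$ is required --- and that the statement genuinely fails at $n=2$, since the prism $K_2\square C_{2k}$ is not $2$-choosable --- is a point the paper leaves implicit.
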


\begin{proof}
We have $$n\geqslant \AT(K_n\square C_{2k})
\geqslant \ch(K_n\square C_{2k})\geqslant
\ch(K_n)\geqslant n,$$
where the first inequality follows from 
Proposition \ref{6}, the second from
\eqref{choice-and-AT}, the third and
fourth are clear. So all inequalities turn
into equalities.
\end{proof}

Next proposition is a generalization of Theorem \ref{thm-main} for arbitrary graphs (not necessarily with even degrees,
not necessarily with a non-zero almost central coefficient.)
Roughly speaking, it bounds the choosability in dependence on how not-so-far-from-central coefficient does
the graph polynomial have. It also
gives Corollary \ref{nine} (we skip the details).

\begin{proposition}
Let $G=(V,E)$ be a graph; denote $V=\{v_1,\dots,v_n\}$. For any $\eta=(\eta_1,\dots,\eta_n)$ denote $l_G(\eta,i)=|\eta_i-\deg_G(v_i)/2|$.
Consider a non-zero coefficient $[\x^\tau]F_G(\x)$ of the graph polynomial $F_G$. 
Partition the set $\{1,\ldots,n\}$
onto sets
\begin{align*}
    N&=\{i:\tau_i=\deg_G(v_i)/2\},\\
    A_1&=\{i:\tau_i\leqslant \deg_G(v_i)/2-1\},\\
    A_2&=\{i:\tau_i=\deg_G(v_i)/2-1/2\},\\
    B_1&=\{i:\tau_i\geqslant \deg_G(v_i)/2+1\},\\
    B_2&=\{i:\tau_i=\deg_G(v_i)/2+1/2\}.
\end{align*}

Additionally, let $A_3$ be an arbitrary subset of $A_1$ of size $\max(0,|A_1|-|B_1|)$; let $B_3$ be an arbitrary subset of $B_1$ of size $\max(0,|B_1|-|A_1|)$. Then $G\square C_{2k}$ is $f$-choosable, where
\[
f(v_i)=\begin{cases}
\deg_G(v_i)/2+2, & \text{ if } i\in N,\\
\deg_G(v_i)/2+l_G(\tau,i)+1, & \text{ if } i\in A_1\cup B_1\setminus A_3\setminus B_3,\\
\deg_G(v_i)/2+l_G(\tau,i)+2, & \text{ if } i\in A_2\cup B_2\cup A_3\cup B_3.
\end{cases}
\]
\end{proposition}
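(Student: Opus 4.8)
The plan is to deduce the statement from the Combinatorial Nullstellensatz applied to $F_H$, where $H=G\square C_{2k}$: since $F_H$ is homogeneous of degree $|E(H)|$, it suffices to produce a single multi-index $\sigma$ on $V(H)$ with $[\x^\sigma]F_H\neq 0$ and $\sigma_{(i,j)}\leqslant f(v_i)-1$ for every vertex $(v_i,j)$ (the equality $|\sigma|=|E(H)|$ is then automatic). As in Section~2, I would extract such a coefficient by expanding each cycle factor $\prod_i(x^{j+1}_i-x^j_i)$: writing the target exponent of $x^j_i$ as $c^j_i$, the coefficient becomes a signed sum over $\varepsilon\in\{0,1\}^{2k\times n}$ of $\prod_j[\x^{\xi^j(\varepsilon)}]F_G$, where $\xi^j_i(\varepsilon)=c^j_i-1+\varepsilon^j_i-\varepsilon^{j-1}_i$ and the indices are cyclic modulo $2k$. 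Unlike the central case of Theorem~\ref{thm-main}, here the $F_G$-factors must be allowed to equal the off-central coefficient $[\x^\tau]F_G$, so a constant profile $c^j\equiv c$ will not suffice.

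The key device is a \emph{two-periodic} profile. I would set $c^j=u$ for odd $j$ and $c^j=w$ for even $j$, with the reflection relation $u_i+w_i=\deg_G(v_i)+2$ for all $i$, so that $w$ is the mirror image of $u$ about the central target $\deg_G(v_i)/2+1$ of $H$. This relation alone forces $\sum_{i,j}c^j_i=k\sum_i(u_i+w_i)=|E(H)|$, so the degree bookkeeping is free. The profile $u$ is chosen coordinatewise so that the odd copies can realize $\tau$ and the even copies its reflection $2\a-\tau$: on $N$ one is forced to $u_i=\deg_G(v_i)/2+1$; on the ``up'' vertices $B_1,B_2$ one raises $u_i$ to about $\tau_i$; on the ``down'' vertices $A_1,A_2$ one raises $u_i$ to about $\tau_i+2$, so that the reflected value $w_i=2\a_i-\tau_i$ carries the exponent. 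The requirement is $\max(u_i,w_i)\leqslant f(v_i)-1$ at every vertex, and this is exactly what the definition of $f$ encodes; the auxiliary sets $A_3,B_3$ of sizes $\max(0,|A_1|-|B_1|)$ absorb the mismatch between the number of coordinates shifted up and down, granting one extra unit of list (the ``$+2$'') precisely where the coordinatewise choice of $u$ would otherwise be pinched.

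With a two-periodic profile the signed sum collapses to $\tr\big((UW)^{k}\big)$, where $U,W$ are the $2^n\times 2^n$ transfer matrices $U(s,s')=(-1)^{|s|}[\x^{u-\mathbf 1+s'-s}]F_G$ and the analogue for $w$, indexed by states $s,s'\in\{0,1\}^n$. Here homogeneity of $F_G$ is decisive: a nonzero entry forces $|s'|-|s|$ to equal the constant $|E(G)|-|u|+n$, and the corresponding constant for $w$ is its negative. Using the symmetry \eqref{symmetry} exactly as in Section~3 one checks $\xi_W(s,s')=2\a-\xi_U(s',s)$, and because the grading shift is constant on the support, the state-dependent signs collapse to a single global sign, giving $W=\pm U^{T}$ \emph{as matrices}. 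Hence $UW=\pm UU^{T}$ is, up to an overall sign, real symmetric positive semidefinite, so $\tr((UW)^k)=\pm\sum_i\mu_i^{k}$ with all $\mu_i\geqslant 0$; this vanishes if and only if $U=0$. Finally $U\neq 0$, since the chosen $u$ together with the forced states $s'_i-s_i=\tau_i-u_i+1\in\{-1,0,1\}$ produce the entry $U(s,s')=\pm[\x^\tau]F_G\neq 0$. Thus $[\x^\sigma]F_H\neq 0$, and the Combinatorial Nullstellensatz yields the claimed $f$-choosability.

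The main obstacle I anticipate is twofold. First, the identity $W=\pm U^{T}$ (as opposed to a diagonal conjugate of it) rests entirely on the homogeneity grading making $|s|-|s'|$ constant on the support of $U$; without this the induced signs would not cancel and the $k$-th powers of the eigenvalues could fail to share a common sign, so I would establish this grading fact first. Second, the coordinatewise choice of $u$ must simultaneously realize $\tau$ and keep both $u_i$ and $w_i=\deg_G(v_i)+2-u_i$ within $f(v_i)-1$; the delicate point is the global count, where $|A_1|\neq|B_1|$ forces a few coordinates to carry an extra unit, which is the bookkeeping built into $A_3,B_3$. Once these two points are settled, the positive-semidefinite trace argument closes the proof.
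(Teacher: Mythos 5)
Your proposal is correct, but it takes a genuinely different route from the paper's proof. The paper never revisits the transfer-matrix computation: it balances the deviations of $\tau$ from the center by pairing deficient indices (from $A_1\cup A_2$) with excessive ones (from $B_1\cup B_2$) via the multisets $A,B$ --- this pairing is precisely why $A_3,B_3$ appear and why homogeneity ($|\tau|=|E(G)|$) is invoked, to get $|A|=|B|$ --- then multiplies $F_G$ by the factors $(x_{a_j}\pm x_{b_j})$, i.e.\ adds multiedges, so that $\x^\tau\prod_j x_{a_j}$ becomes an \emph{almost central} coefficient of a generalized graph polynomial $Q_\varepsilon$ with even degree function $2f(v_i)-4$; the averaging identity $x_{a_j}=\frac12\bigl((x_{a_j}+x_{b_j})+(x_{a_j}-x_{b_j})\bigr)$ yields a sign pattern $\varepsilon$ keeping the coefficient non-zero, and Remark~\ref{remark} together with divisibility of the resulting polynomial by $F_{G\square C_{2k}}$ finishes the argument. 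You instead work with $F_{G\square C_{2k}}$ itself and generalize the trace argument of Section~3: the two-periodic profile $u,w$ with $u+w=\d+2\cdot\mathbf{1}$, the grading observation that homogeneity of $F_G$ makes $|s'|-|s|$ constant on the support so that $W=\pm U^{T}$ holds exactly as matrices (not merely up to diagonal conjugation --- this step is correct, since the entrywise identity $W(s,s')=(-1)^{|E|+|s|-|s'|}U(s',s)$ forces both sides to vanish together), and positivity of $\tr\bigl((UU^{T})^{k}\bigr)$ unless $U=0$, with $U\ne 0$ witnessed by the $\tau$-entry. Both proofs rest on the same two pillars, the reflection symmetry \eqref{symmetry} and trace positivity, but yours applies them directly to the product graph, avoiding multigraphs, generalized graph polynomials, and the sign-averaging step.

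One correction to your own commentary: your approach is actually sharper than the statement, and your worry about a ``global count'' is a red herring imported from the paper's pairing construction. In your scheme the feasibility constraints on $u_i$ are purely coordinatewise: $u_i\in\{\tau_i,\tau_i+1,\tau_i+2\}$ (so the $\tau$-entry shows $U\ne 0$) and $\deg_G(v_i)+3-f(v_i)\leqslant u_i\leqslant f(v_i)-1$ (so $\max(u_i,w_i)\leqslant f(v_i)-1$). These are satisfiable for every $i\in A_1\cup B_1$ already with the smaller value $f(v_i)=\deg_G(v_i)/2+l_G(\tau,i)+1$: take $u_i=\tau_i+2$ on $A_1$ and $u_i=\tau_i$ on $B_1$, both of which work whenever $l_G(\tau,i)\geqslant 1$. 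So in your argument the sets $A_3,B_3$ and the extra unit on them are superfluous; nothing is ever ``pinched'', and no coordinate needs to absorb a mismatch between $|A_1|$ and $|B_1|$. This is harmless for proving the proposition as stated, since its $f$ is pointwise at least what you need and enlarging lists only helps, but you should drop that justification --- it plays no role in your construction, only in the paper's.
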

\begin{proof}
Define a multiset $A$ as follows:
\begin{itemize}
    \item each $i\in A_1\setminus A_3$ occurs $2(l_G(\tau,i)-1)$ times in $A$;
    \item each $i\in A_2\cup A_3$ occurs $2l_G(\tau,i)$ times in $A$.
\end{itemize}
Multiset $B$ is defined similarly. Note that
\[
|A|=\sum_{i\in A_1\cup A_2}2l_G(\tau,i)-2\min(|A_1|,|B_1|).
\]
Similarly,
\[
|B|=\sum_{i\in B_1\cup B_2}2l_G(\tau,i)-2\min(|A_1|,|B_1|).
\]
It follows that $|A|=|B|=:m$. Let $A=\{a_1,\dots,a_m\}$, $B=\{b_1,\dots,b_m\}$.

Consider $2^m$
polynomials
$$
Q_{\varepsilon}(\x)=F_G(\x)\cdot\prod_{j=1}^m(x_{a_j}\pm x_{b_j})
$$
indexed by the choice $\varepsilon$ of $m$ signs.
Using the relation $x_{a_j}=\frac12(
(x_{a_j}+x_{b_j})+(x_{a_j}-x_{b_j}))$
we see that
the polynomial $Q:=F_G(\x)\cdot \prod x_{a_i}$
is a linear combination of $Q_{\varepsilon}$'s. Note that $$\left[\x^\tau\cdot \prod x_{a_i}
\right] Q=\left[\x^\tau\right]F_G\ne 0,$$
therefore there exists $\varepsilon$
such that
$$
\left[\x^\tau\cdot \prod x_{a_i}
\right] Q_\varepsilon \ne 0.
$$

Note that $Q_{\varepsilon}$
is a generalized graph polynomial of a
certain multigraph on the ground set $V$
with degree function
$2f(v_i)-4$. The coefficient of
$\x^\tau\cdot \prod x_{a_i}$ 
is almost central for $Q_{\varepsilon}$.

Now it follows from the Remark \ref{remark} that the polynomial \eqref{gengr} (for
$Q=Q_\varepsilon$)
has a non-zero central coefficient. 

Finally, since the graph polynomial $F_{G\square C_{2k}}$ divides this polynomial, graph $G\square C_{2k}$ is $f$-choosable.
\end{proof}

We are grateful to Noga Alon for bringing the papers \cite{johansson,Molloy2019}
to our attention.

The study was funded by Foundation for the Advancement of Theoretical Physics and Mathematics ``BASIS'' (general
Sections 2 and 3) and by
by RFBR, project number 19-31-90081
(applications in Section 4).

\bibliographystyle{plain}
\bibliography{bibliography}

\end{document}